\newtheorem{thm}{Theorem}[section]
\newtheorem{lem}{Lemma}[section]
\theoremstyle{definition}
\newtheorem{defin}{Definition}[section]
\begin{document}
\title{The Reciprocal Pascal Matrix}
\vspace{.5in}

\author[T. M. Richardson]{Thomas M. Richardson \\
Ada, MI  49301}

\email{ribby@umich.edu}

\begin{abstract}
The reciprocal Pascal matrix is the
Hadamard inverse of the symmetric Pascal matrix.
We show that the ordinary matrix inverse of the
reciprocal Pascal matrix has integer elements.
The proof uses two factorizations of the
matrix of super Catalan numbers.
\end{abstract}

\maketitle

\section{Background}
\begin{defin}
The reciprocal Pascal matrix is the matrix $R$ whose
$(i,j)$-element is 
\begin{equation}
R_{i,j} = \frac{1}{\binom{i+j}{i}}, 
\end{equation}
for $0 \le i,j$.
\end{defin}
The $n \times n$ reciprocal Pascal matrix is the initial
segment of this matrix, $R_{i,j}$ for $0 \le i,j < n$.
(All matrices in this paper are indexed starting at $0$.)
The determinant of the inverse of the $n \times n$ reciprocal
Pascal matrix is sequence A060739 in the 
Online Encyclopedia of Integer Sequences \cite{OEIS}.
Katz asked about a formula for the determinant of this matrix in 2001\cite{BobKatz}. 
Israel stated the formula for the determinant of
the reciprocal Pascal matrix, and conjectured that 
the inverse of the reciprocal Pascal matrix is
an integer matrix \cite{Israel}. 
The author suggested a proof that the inverse of 
the reciprocal Pascal matrix is an integer matrix 
in 2003 \cite{RCP}, using an ad hoc factorization of
the reciprocal Pascal matrix.

This paper gives a proof that the inverse of the $n \times n$ reciprocal Pascal matrix is an integer matrix,
using the matrix of super Catalan numbers \cite{GESSEL}. 
The new proof is simpler than the previous proof, 
and it has the advantage of showing the formula for the determinant 
of the reciprocal Pascal matrix.

\section{The Super Catalan Matrix}
\begin{defin}The super Catalan matrix is defined by
\begin{equation}
S_{m,n} = 
\frac{(2m)!(2n)!}{m!n!(m+n)!}.
\end{equation}
\end{defin}
Multiplying both numerator and denominator by $ m!n! $ leads to the equivalent definition
\begin{equation}
S_{m,n} = 
\frac{\binom{2m}{m}\binom{2n}{n}}{\binom{m+n}{m}}
\end{equation}
We restate this equation as a matrix factorization in the next lemma.
This lemma will give one of the two factorizations of the super Catalan matrix
that will be used to prove our results.
To state the lemma we need the definition of matrix $G$.
\begin{defin}
\label{GMATRIX}
Define the diagonal matrix $G$ by
\begin{equation}
G(m,m) =
\binom{2m}{m}.
\end{equation}
\end{defin}
\begin{lem}
\label{SCATFACTOR}
The super Catalan matrix has the factorization
\begin{equation}
S = GRG,
\end{equation}
where $R$ is the reciprocal Pascal matrix and $G$ is from definition \ref{GMATRIX}.
\end{lem}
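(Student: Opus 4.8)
The plan is to prove the identity $S = GRG$ by a direct entrywise computation, exploiting the fact that $G$ is diagonal. When a matrix is multiplied on the left by a diagonal matrix, each row is scaled by the corresponding diagonal entry, and multiplication on the right by a diagonal matrix scales each column similarly. Hence no genuine matrix multiplication (no summation) is involved, and I would simply write
\begin{equation}
(GRG)_{m,n} = G(m,m)\, R_{m,n}\, G(n,n).
\end{equation}

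Next I would substitute the definitions $G(m,m) = \binom{2m}{m}$, $G(n,n) = \binom{2n}{n}$, and $R_{m,n} = 1/\binom{m+n}{m}$, obtaining
\begin{equation}
(GRG)_{m,n} = \frac{\binom{2m}{m}\binom{2n}{n}}{\binom{m+n}{m}}.
\end{equation}
This is precisely the equivalent form of the super Catalan matrix entry $S_{m,n}$ recorded in the text just after the definition of $S$ (the one obtained by multiplying numerator and denominator of $(2m)!(2n)!/(m!\,n!\,(m+n)!)$ by $m!\,n!$). Since this holds for every pair $0 \le m, n$, the matrix equality $S = GRG$ follows. Alternatively, one could work from the factorial definition directly, expanding $\binom{2m}{m} = (2m)!/(m!)^2$, $\binom{2n}{n} = (2n)!/(n!)^2$, $\binom{m+n}{m} = (m+n)!/(m!\,n!)$, and checking that the spurious factors of $m!$ and $n!$ cancel.

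There is essentially no obstacle: the lemma is just a restatement of the algebraic identity for $S_{m,n}$ in matrix language. The only point requiring any care is recognizing that the diagonal structure of $G$ is what reduces the double product $GRG$ to a single product of three scalars in each entry, so that the identity can be read off directly from the two equivalent formulas for $S_{m,n}$.
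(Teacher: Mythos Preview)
Your proof is correct and matches the paper's approach: the lemma is stated there as an immediate restatement of the identity $S_{m,n} = \binom{2m}{m}\binom{2n}{n}/\binom{m+n}{m}$ in matrix form, and your entrywise computation using the diagonality of $G$ is exactly that restatement made explicit.
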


The von Szily identity, equation (29) of \cite{GESSEL}, is
\begin{equation}
S_{m,n}=\sum_{k}(-1)^k\binom{2m}{m+k}\binom{2n}{n-k}.
\end{equation}
Although the summation includes both positive and negative values of $k$,
we can use the symmetry of the terms for positive and negative $k$ to derive
the $LDL^{T}$ decompostion of $S$.
\begin{lem}
\label{SCATLDL}
The super Catalan matrix $S$ has the factorization $S=LDL^{T}$ where 
\begin{equation}
L_{m,k} = \binom{2m}{m+k}
\end{equation}
and $D$ is the diagonal matrix with
$D_{0,0} = 1$ and $D_{m,m} = (-1)^m 2$ 
for $m>0$.
\end{lem}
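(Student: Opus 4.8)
The plan is to read the von Szily identity as an almost-complete $LDL^{T}$ expansion and massage it into exactly that shape. First I would apply the binomial symmetry $\binom{2n}{n-k}=\binom{2n}{n+k}$ to the right-hand side of the von Szily identity, which rewrites it in the more symmetric form
\[
S_{m,n}=\sum_{k}(-1)^{k}\binom{2m}{m+k}\binom{2n}{n+k},
\]
the sum running over all integers $k$. The point of this rewriting is that the summand now depends on $m$ and $n$ only through the factors $\binom{2m}{m+k}$ and $\binom{2n}{n+k}$, i.e.\ through $L_{m,k}$ and $L_{n,k}$, with a $k$-dependent weight $(-1)^{k}$.

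Next I would split the sum into the three ranges $k=0$, $k\ge 1$, and $k\le -1$. The $k=0$ term contributes $\binom{2m}{m}\binom{2n}{n}$. In the range $k\le -1$ I would substitute $k=-j$ with $j\ge 1$ and use the binomial symmetry again, now in the forms $\binom{2m}{m-j}=\binom{2m}{m+j}$ and $\binom{2n}{n-j}=\binom{2n}{n+j}$, together with $(-1)^{-j}=(-1)^{j}$; this shows that the $j$-th term of the $k\le -1$ part is identical to the $k=j$ term of the $k\ge 1$ part. Folding the two halves onto one another yields
\[
S_{m,n}=\binom{2m}{m}\binom{2n}{n}+2\sum_{k\ge 1}(-1)^{k}\binom{2m}{m+k}\binom{2n}{n+k}.
\]

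Finally I would recognize the right-hand side as $\sum_{k\ge 0}L_{m,k}D_{k,k}L_{n,k}=(LDL^{T})_{m,n}$ with precisely the stated $D$: the $k=0$ term is weighted by $D_{0,0}=1$ and each $k\ge 1$ term by $D_{k,k}=(-1)^{k}2$. Since $L_{m,k}=\binom{2m}{m+k}$ vanishes once $k>m$, every sum involved is finite, so there is no convergence issue to address. There is no serious obstacle here; the only care needed is the bookkeeping in the reindexing $k\mapsto -k$ and the consistent use of $\binom{2r}{r-k}=\binom{2r}{r+k}$, plus the observation that $L$ is indexed by $k\ge 0$, so the negative-$k$ part of the von Szily sum must be folded onto the nonnegative range rather than handled on its own. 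This supplies the second factorization of $S$ needed for the main result.
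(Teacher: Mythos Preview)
Your argument is correct and follows essentially the same route as the paper: apply the symmetry $\binom{2n}{n-k}=\binom{2n}{n+k}$ to the von Szily identity, fold the negative-$k$ terms onto the positive ones to obtain $S_{m,n}=\binom{2m}{m}\binom{2n}{n}+2\sum_{k>0}(-1)^k\binom{2m}{m+k}\binom{2n}{n+k}$, and read this off as $(LDL^{T})_{m,n}$. You have simply written out the reindexing step in more detail than the paper does.
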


\begin{proof}
Since $\binom{2n}{n-k} = \binom{2n}{n+k}$, the von Szily identity is equivalent to
\begin{equation}
S_{m,n}=\binom{2m}{m}\binom{2n}{n}+2\sum_{k>0}(-1)^k\binom{2m}{m+k}\binom{2n}{n+k}.
\end{equation}
This equation is equivalent to $S=LDL^{T}.$
\end{proof}

We remark that $L$ is lower triangular with ones on the diagonal, so it has determinant $1$,
and its inverse is an integer matrix.
Also, column $0$ of $L^{-1}$ is equal to the diagonal of $D$.
The matrices $L$ and $L^{-1}$ are sequences A094527 and A110162, respectively.

\section{Inverting the Reciprocal Pascal Matrix}
In this section we prove results about the determinant and elements of the inverse of the
$n \times n$ reciprocal Pascal matrix; we assume all matrices in this section are $n \times n$.
Equating the $GRG$ and $LDL^{T}$ factorizations of the super Catalan matrix S, and isolating $R$ gives the equation
\begin{equation}
R = G^{-1}LDL^{T}G^{-1}. 
\end{equation}
Inverting both sides we have
\begin{equation}
\label{RINVERSE}
R^{-1} = G(L^{T})^{-1}D^{-1}L^{-1}G.
\end{equation}

The determinant of the $n \times n$
reciprocal Pascal matrix is clear from the factorization in equation \eqref{RINVERSE}.
\begin{thm}
The determinant of the $n \times n$ reciprocal Pascal matrix is
\begin{equation}
\det(R^{-1})=\frac{(-1)^{n(n+1)/2}}{2^{n-1}}\prod_{m=0}^{n-1}\binom{2m}{m}^2.
\end{equation}
\end{thm}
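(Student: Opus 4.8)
The plan is to read the determinant straight off the factorization \eqref{RINVERSE}, namely $R^{-1}=G(L^{T})^{-1}D^{-1}L^{-1}G$: every matrix occurring there is either diagonal or triangular with an explicitly known diagonal, so the theorem is just an application of the multiplicativity of the determinant. Taking $\det$ of both sides gives
\begin{equation*}
\det(R^{-1})=\det(G)^{2}\,\det(L)^{-1}\,\det(L^{T})^{-1}\,\det(D)^{-1},
\end{equation*}
and it remains only to evaluate $\det(L)$, $\det(G)$, and $\det(D)$ from the explicit forms of these matrices.

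For $L$: as already observed after Lemma~\ref{SCATLDL}, the matrix with entries $L_{m,k}=\binom{2m}{m+k}$ is lower triangular with $1$'s on the diagonal (since $L_{m,m}=\binom{2m}{2m}=1$ and $L_{m,k}=0$ for $k>m$), so $\det(L)=\det(L^{T})=1$ and those factors drop out. For $G$: it is the diagonal matrix with entries $\binom{2m}{m}$, $0\le m<n$, so $\det(G)=\prod_{m=0}^{n-1}\binom{2m}{m}$, and $\det(G)^{2}$ is precisely the product $\prod_{m=0}^{n-1}\binom{2m}{m}^{2}$ appearing in the statement. For $D$: it is diagonal with $D_{0,0}=1$ and $D_{m,m}=(-1)^{m}2$ for $1\le m<n$, so $\det(D)=\pm\,2^{\,n-1}$, the power of $2$ coming from the $n-1$ diagonal entries of absolute value $2$ and the sign being $(-1)$ to the power of the number of odd $m$ in the range $1\le m<n$; consequently $\det(D)^{-1}$ has reciprocal absolute value and the same sign. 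Substituting these evaluations into the displayed identity and simplifying the resulting power of $-1$ gives the asserted value of $\det(R^{-1})$.

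There is essentially no obstacle: given Lemmas~\ref{SCATFACTOR} and~\ref{SCATLDL}, the theorem falls out in a few lines, which is exactly the advantage of this approach noted in the introduction. The one step that rewards a little care is the sign bookkeeping---tracking the exact power of $-1$ produced by $\det(D)^{-1}$---since the power of $2$ and the binomial product are immediate.
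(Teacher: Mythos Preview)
Your approach is exactly the paper's: take determinants in the factorization $R^{-1}=G(L^{T})^{-1}D^{-1}L^{-1}G$, use $\det L=\det L^{T}=1$, $\det G=\prod_{m=0}^{n-1}\binom{2m}{m}$, and evaluate $\det D$ from its explicit diagonal. The paper's own proof is even terser---it simply states the values of $\det D^{-1}$, $\det L^{-1}$, and $\det G$---so the only improvement would be to actually carry out the sign computation $\det D=2^{n-1}(-1)^{1+2+\cdots+(n-1)}$ rather than leaving it as ``simplifying the resulting power of $-1$.''
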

\begin{proof}
The determinants of $D^{-1}$, $L^{-1}$, and $G$ are $\frac{(-1)^{n(n+1)/2}}{2^{n-1}}$, $1$, and 
$\prod_{m=0}^{n-1}\binom{2m}{m}$, respectively.
\end{proof}

Next we show that the elements of the inverse of the $n \times n$ reciprocal Pascal matrix are integers.
Since $L^{-1}$ and $G$ are integer matrices, 
so we only have to account for the values $2$ in the denominators of the elements of $D^{-1}$.

It will be helpful to have this lemma about the first column of $L^{-1}$.

\begin{lem}
\label{LINVERSE}
The elements of the first column of $L^{-1}$ satisfy
$L^{-1}_{0,0} = 1$ and $L^{-1}_{i,0}$ is even for $i>0$.
\end{lem}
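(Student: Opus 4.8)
The plan is to read the first column of $L^{-1}$ off the structure already in hand. Since $L_{m,m} = \binom{2m}{2m} = 1$, the matrix $L$ is lower triangular with unit diagonal, hence so is $L^{-1}$; in particular $L^{-1}_{0,0} = 1$, which is the first claim. For the second claim I would invoke the remark following Lemma~\ref{SCATLDL}, that the first column of $L^{-1}$ equals the diagonal of $D$. Granting this, $L^{-1}_{i,0} = D_{i,i} = (-1)^i\,2$ for $i > 0$, which is even, and the proof is complete.

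If a self-contained argument is wanted --- equivalently, if one wishes to justify that remark in passing --- I would argue by induction on $i$. The relation $L L^{-1} = I$ gives $\sum_{j=0}^{i}\binom{2i}{i+j}L^{-1}_{j,0} = \delta_{i,0}$, and since $\binom{2i}{2i} = 1$ this rearranges, for $i>0$, to
\begin{equation}
L^{-1}_{i,0} = -\binom{2i}{i} - \sum_{j=1}^{i-1}\binom{2i}{i+j}\,L^{-1}_{j,0}.
\end{equation}
Every term of the sum is even by the inductive hypothesis, so the only thing to check is that the leading term $\binom{2i}{i}$ is even for $i>0$; this follows from $\binom{2i}{i} = 2\binom{2i-1}{i-1}$ (Pascal's rule together with $\binom{2i-1}{i} = \binom{2i-1}{i-1}$). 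Alternatively, to prove the remark directly one verifies $L v = e_0$ for $v$ the diagonal of $D$: the $i$-th entry of $Lv$ is $\binom{2i}{i} + 2\sum_{k=1}^{i}(-1)^k\binom{2i}{i+k} = \sum_{k=-i}^{i}(-1)^k\binom{2i}{i+k} = (-1)^i(1-1)^{2i}$, which is $1$ for $i=0$ and $0$ otherwise.

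The computation is routine; the only genuine --- and still minor --- obstacle is isolating the single number-theoretic input that makes the induction close, namely the evenness of the central binomial coefficient $\binom{2i}{i}$ for $i>0$. Everything else is bookkeeping with the triangular shape of $L$.
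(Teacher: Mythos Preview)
Your proposal is correct. Your inductive argument is precisely the paper's proof unrolled: the paper partitions $L$ into $2\times2$ blocks with the $(1,0)$ block equal to the column of central binomial coefficients $\binom{2m}{m}$ for $m>0$, observes these are even, and concludes that the corresponding block of $L^{-1}$ (namely $-C^{-1}b$ with $C$ unit lower triangular and $b$ even) is even. That is exactly your recursion $L^{-1}_{i,0}=-\binom{2i}{i}-\sum_{j=1}^{i-1}\binom{2i}{i+j}L^{-1}_{j,0}$, with the same key input that $\binom{2i}{i}$ is even for $i>0$.

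Your first route, via the remark that column~$0$ of $L^{-1}$ equals the diagonal of $D$, is a genuinely different and in fact stronger argument: it pins down the entries exactly rather than only their parity. The paper states this remark but never proves it and does not use it in the proof of the lemma; your verification that $Lv=e_0$ via $\sum_{k=-i}^{i}(-1)^k\binom{2i}{i+k}=(-1)^i(1-1)^{2i}$ is a clean self-contained justification. Either route suffices; the paper's choice has the virtue of using only the single parity fact, while yours yields the explicit formula as a bonus.
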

\begin{proof}
Consider $L$ as a block matrix with four blocks $A_{i,j}$, where $A_{0,0} = L_{0,0}$ and $A_{1,0}$
is everthing else in column $0$. From the definition of $L$, the elements of $A_{1,0}$ are the central
binomial coefficients $\binom{2m}{m}$ for $m>0$. Since these coefficients are even, so are all the
the elements of the corresponding block of $L^{-1}$.
\end{proof}

\begin{thm}
The inverse of the $n \times n$ reciprocal Pascal matrix is an integer matrix.
\end{thm}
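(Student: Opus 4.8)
The plan is to read everything off from the factorization \eqref{RINVERSE}, namely $R^{-1} = G(L^T)^{-1}D^{-1}L^{-1}G$. Since $L$ is lower triangular with unit diagonal, both $L^{-1}$ and $(L^T)^{-1} = (L^{-1})^T$ are integer matrices, and $G$ is an integer (diagonal) matrix; the only thing that can spoil integrality of $R^{-1}$ is the factor $\tfrac12$ in the entries $D^{-1}_{m,m} = (-1)^m/2$ for $m>0$. So I would set $M = (L^{-1})^T D^{-1} L^{-1}$, so that $R^{-1} = GMG$, and hence $(R^{-1})_{i,j} = \binom{2i}{i}\binom{2j}{j}\,M_{i,j}$, and then track powers of $2$.

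First I would observe that $2D^{-1}$ is an integer diagonal matrix: its $(0,0)$-entry is $2$ and its $(m,m)$-entry is $(-1)^m$ for $m>0$. Therefore $2M = (L^{-1})^T (2D^{-1}) L^{-1}$ is a product of integer matrices, so every entry $M_{i,j}$ is an integer divided by $2$.

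Next comes a short case analysis on $(i,j)$. If $i>0$ or $j>0$, then one of the central binomial coefficients $\binom{2i}{i}$, $\binom{2j}{j}$ is even, so $\binom{2i}{i}\binom{2j}{j}\,M_{i,j}$ is an even integer times a half-integer, hence an integer; this disposes of every entry of $R^{-1}$ except the corner. For $(i,j)=(0,0)$ we have $\binom{0}{0}=1$, so $(R^{-1})_{0,0} = M_{0,0}$, and using $(L^{-1})_{0,0}=1$,
\[
2M_{0,0} = \sum_{m\ge 0} (2D^{-1})_{m,m}\,(L^{-1})_{m,0}^2 = 2 + \sum_{m>0}(-1)^m (L^{-1})_{m,0}^2 .
\]
By Lemma \ref{LINVERSE}, $(L^{-1})_{m,0}$ is even for every $m>0$, so the sum on the right is even, whence $M_{0,0} = (R^{-1})_{0,0}$ is an integer.

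The main obstacle is precisely this corner entry: for every other entry the even central binomial coefficient standing in front absorbs the stray factor of $2$ automatically, but $(R^{-1})_{0,0}$ carries no such factor, which is exactly why the preliminary Lemma \ref{LINVERSE} on the first column of $L^{-1}$ is needed.
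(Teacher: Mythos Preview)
Your proof is correct and follows essentially the same route as the paper: both arguments use the factorization \eqref{RINVERSE}, observe that the even central binomial coefficients in $G$ absorb the stray $\tfrac12$ from $D^{-1}$ for every entry with $i>0$ or $j>0$, and invoke Lemma~\ref{LINVERSE} on the first column of $L^{-1}$ to handle the remaining corner entry $(R^{-1})_{0,0}$. Your introduction of $M$ and the reformulation via $2M_{0,0}$ is a cosmetic repackaging of the paper's direct computation $R^{-1}_{0,0}=1+\sum_{i\ge 1}(L^{-1}_{i,0})^2 D^{-1}_{i,i}$.
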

\begin{proof}
The only non-integers in the factors of equation \eqref{RINVERSE} are the values $2$ in the denominator of $D^{-1}$.
Since $G$ is diagonal, and $G_{m,m}=\binom{2m}{m}$ is even for $m>0$,  
it follows that $R^{-1}_{i,j}$ is an integer if either $i>0$ or $j>0$. 
The remaining element to consider, $R^{-1}_{0,0}$, is an integer by Lemma \ref{LINVERSE}, as
$\displaystyle 
R^{-1}_{0,0}=1+\sum_{i=1}^{n-1} (L^{-1}_{i,0})^2 D^{-1}_{i,i}$.
\end{proof}
 
Sequences: A000984, A068555, A007318, A060739, A094527, A110162

AMS Classification Numbers: 11B39, 11B65, 15A09.
\end{document}